\newcommand{\ie}{\emph{i.e.}}
\newcommand{\smallconc}[2]{\begin{bsmallmatrix} #1 \\ #2 \end{bsmallmatrix}}
\renewcommand{\>}{\rangle}
\newcommand{\dbrak}[1]{\llbracket #1 \rrbracket}
\newcommand{\ox}{{\mathring{x}}}
\newcommand{\ow}{{\mathring{w}}}
\newcommand{\co}{\operatorname{co}}
\newcommand{\bfone}{\mathbf{1}}
\newcommand{\minimize}{\operatornamewithlimits{minimize}}
\newcommand{\R}{\mathbb{R}}
\newcommand{\calA}{\mathcal{A}}
\newcommand{\calB}{\mathcal{B}}
\newcommand{\calG}{\mathcal{G}}
\newcommand{\calM}{\mathcal{M}}
\newcommand{\calR}{\mathcal{R}}
\newcommand{\calS}{\mathcal{S}}
\newcommand{\calW}{\mathcal{W}}
\newcommand{\calX}{\mathcal{X}}
\newcommand{\bfw}{\mathbf{w}}
\newcommand{\bfI}{\mathbf{I}}
\newcommand{\ol}[1]{\overline{#1}}
\newcommand{\rmd}{\mathrm{d}}
\newcommand{\ringz}{\mathring{z}}
\theoremstyle{plain}
\newtheorem{proposition}{Proposition}
\newtheorem{lemma}{Lemma}
\newtheorem{theorem}{Theorem}
\theoremstyle{definition}
\newtheorem{definition}{Definition}
\newtheorem{example}{Example}
\theoremstyle{remark}
\newtheorem{remark}{Remark}
\newcommand{\nx}{{n_x}}
\newcommand{\nw}{{n_w}}
\newcommand{\ny}{{n_y}}
\newcommand{\Rx}{\R^\nx}
\newcommand{\Rw}{\R^\nw}
\newcommand{\Ry}{\R^\ny}
\newcommand{\Uff}{U_{\text{ff}}}
\newcommand{\normo}[1]{\dbrak{#1}}
\title{\LARGE \bf
Efficient Norm-Based Reachable Sets via \\ Iterative Dynamic Programming
}
\author{Akash Harapanahalli and Samuel Coogan$^{1}$%
\thanks{$^{1}$Akash Harapanahalli and Samuel Coogan are with the School of Electrical and Computer Engineering, Georgia Institute of Technology, Atlanta, GA, 30318, USA. \{aharapan,sam.coogan\}.gatech.edu}%
}
\begin{document}

\maketitle
\thispagestyle{empty}
\pagestyle{empty}

\begin{abstract}
In this work, we present a numerical optimal control framework for reachable set computation using \emph{normotopes}, a new set representation as a norm ball with a shaping matrix.
In reachable set computations, we expect to continuously vary the shape matrix as a function of time.
Incorporating the shape dynamics as an input, we build a \emph{controlled embedding system} using a linear differential inclusion overapproximating the dynamics of the system, where a single forward simulation of this embedding system always provides an overapproximating reachable set of the system, no matter the choice of \emph{hypercontrol}.
By iteratively solving a linear quadratic approximation of the nonlinear optimal hypercontrol problem, we synthesize less conservative final reachable sets, providing a natural tradeoff between runtime and accuracy. 
Terminating our algorithm at any point always returns a valid reachable set overapproximation.
\end{abstract}

\section{Introduction}

One way to assess the safety of a complex nonlinear system under uncertainty is to verify the specification for its forward reachable set.
For nonlinear systems, the reachable set is often too complex to determine in closed form---instead, various algorithmic methods have been developed to compute guaranteed overapproximations.
To combat conservativeness in these algorithmic strategies, different set representations have been developed, which have different tradeoffs between reachable set accuracy and computational efficiency. To name a few, there are zonotopes~\cite{althoff_introduction_2015}, constrained zonotopes~\cite{scott_constrained_2016}, polynomial zonotopes~\cite{kochdumper_sparse_2020}, hybrid zonotopes~\cite{bird_hybrid_2023}, ellipsotopes~\cite{kousik_ellipsotopes_2022}, and taylor models~\cite{chen_reachability_2015}.
For linear systems, any generalized star set~\cite{duggirala_parsimonious_2016} can be propagated using only $n+1$ simulations.
Beyond these generator-based approaches, there are many other frameworks for reachable set computation.
For instance, level set methods use the Hamilton-Jacobi equations~\cite{mitchell_level_2000,bansal_hamilton-jacobi_2017,bansal_deepreach_2021} to represent the reachable set as a level set of a solution to a partial differential equation.

From a dual perspective, one can evolve halfspaces in linear systems using the adjoint equation \cite{varaiya_reach_2000}.
This dual formulation was recently re-explored in~\cite{harapanahalli_parametric_2025}, where they consider general parametric set representations defined by the intersection of constraints, instead of ones generated by constrained combinations.
As shown in~\cite{harapanahalli_parametric_2025}, the parameters defining the constraints can be arbitrarily updated as long as the offset term compensates accordingly.
This observation is the basic backbone of this work---we directly optimize over the dynamics of these parameters to attempt to minimize the final volume of our overapproximating reachable set.

Computing reachable sets using norm balls is not a novel approach on its own, and there are many works applying results from contraction theory to compute such overapproximations.
For instance, \cite{maidens_reachability_2014,arcak_simulation-based_2018} use bounds on logarithmic norms to bloat or shrink norm balls along nominal trajectories. 
The works \cite{fan_locally_2016,fan_simulation-driven_2017} use interval analysis to overapproximate the Jacobian matrix to automatically bound the logarithmic norm.
More recently, \cite{harapanahalli_linear_2024} uses a more accurate interval matrix to bound the logarithmic norm by comparing specifically to the nominal trajectory. 
The advantage of these works is in the simplicity of computing the logarithmic norm, which is often known in closed form---for example, as a simple eigenvalue maximization in the $\ell_2$ case, or a maximum row/column sum computation in the $\ell_\infty$/$\ell_1$ cases \cite{desoer_feedback_1975,davydov_non-euclidean_2022}.
However, to our knowledge, existing works fix the norm along a real portion of the trajectory, using semi-definite programming to find a single shaping matrix along that segment.
In this work, instead of successively choosing fixed norms for segments of the trajectory using semi-definite programming, we consider the shaping matrix as a continuous curve we can dynamically control, and use iterative dynamic programming along the nominal trajectory to find smoothly varying shaping matrices.

\subsubsection*{Contributions}
In this work, we reformulate the reachable set computation problem as an optimal control problem.
First, we propose a simple constraint-based set representation called a \emph{normotope}, which is the sublevel set of a norm with a specified shaping matrix, center, and radius.
We build a \emph{controlled embedding system} for the original system using interval-based linear differential inclusions and logarithmic norms, with the observation that we may smoothly vary the shape matrix of the normotope as long as the offset term compensates accordingly---we call this the \emph{hypercontrol}.

Next, we pose the following question: given an initial set for the system, can we find a hypercontrol mapping through the embedding space such that final reachable set has minimal volume?
To address this problem, we define an optimal control problem to synthesize hypercontrol policies resulting in small terminal set volume.
We show how a dynamic programming algorithms like iLQR can trade extra computations to iteratively optimize the accuracy of the reachable set estimate with respect to the hypercontrol.
With two case studies, we show how (i) just a few cheap iterations can dramatically improve the reachable set accuracy, and (ii) that the algorithm can learn nontrivial policies which accumulate extra uncertainty earlier in the trajectory to shrink the final overapproximating reachable set.

\section{Background and Mathematical Preliminaries}
\subsection{Notations}

Let $\|\cdot\|$ denote a norm.
Let $I$ denote the identity matrix of appropriate dimension and $\bfone$ denote the vector of all ones of appropriate dimension.
For linear operator $A : \Rx\to\Ry$ between normed vector spaces, we set $\|A\|_{x\to y} = \sup_{x\in\Rx : \|x\|_{\Rx} = 1} \|Ax\|_{\Ry}$ to be the operator norm of $A$.
For linear operator $A:\Rx\to\Rx$, define the induced logarithmic norm $\mu(A) = \limsup_{h\searrow0} \frac{\|I + hA\|_{x\to x} - 1}{h}$.
$GL(n)$ is the set of invertible $n\times n$ matrices.
Let $\co\calS$ denote the convex hull of $\calS$.

\subsection{Nonlinear Systems and Reachable Sets}

We consider nonlinear dynamical systems of the form
\begin{align} \label{eq:nlsys}
    \dot{x} = f(t,x,w),
\end{align}
where $x\in\Rx$ is the state of the system, $w\in\Rw$ is a disturbance input to the system, and $f : \R_{\geq0}\times\Rx\times\Rw\to\Rx$ is a $C^1$ smooth function.

Under these assumptions, the system~\eqref{eq:nlsys} has a unique trajectory starting from any initial condition $x_0\in\Rx$ at $t_0$, under essentially bounded disturbance map $\bfw:[t_0,\infty)\to \Rw$ defined for some neighborhood of $t_0$. 
Let $\phi_f(t;t_0,x_0,\bfw)$ denote this trajectory.

Suppose we are given an initial set $\calX_0\subseteq\Rx$ at initial time $t_0$, and a compact disturbance set $\calW\subset\Rw$. Then we can define the (forward) \emph{reachable set} of the system as 
\begin{align*}
    &\calR_f(t;t_0,\calX_0,\calW) \\
    &:= \{\phi_f(t;t_0,x_0,\bfw) : x_0\in\calX_0,\, \bfw:[t_0,t_f]\to\calW\}.
\end{align*}
The reachable set is generally intractable to find in closed-form, both analytically and computationally, as it requires an infinite number of forward simulations of the system.
The goal of this work and many others in the literature~\cite{althoff_set_2021} is to efficiently compute a set \emph{overapproximation} $\ol\calR_f(t;t_0,\calX_0,\calW) \supseteq \calR_f(t;t_0,\calX_0,\calW)$.

An overapproximation of the true reachable set can verify several safety specifications for the system---for instance, reach-avoid problems can be posed as
\begin{align*}
    \underbrace{\ol\calR(t_f;t_0,\calX_0,\calW) \subseteq \calG}_{\text{reach $\calG$ at $t=t_f$}}, \  \underbrace{\ol\calR(t;t_0,\calX_0,\calW) \subseteq \calA^\complement \ \forall t\in[t_0,t_f]}_{\text{avoid $\calA$ for every $t$}}.
\end{align*}
In the coming sections, we propose a new set representation called a normotope, build a controlled embedding system whose trajectory provides a normotope overapproximating the true reachable set, and optimize over the choice of hypercontrol mappings in this embedding space.

\section{Normotopes: A Simple Constraint-Based Set Representation}

In this section, we introduce the normotope, some basic properties, and a simple recipe to compute the normotope reachable set of a linear system using the adjoint dynamics.

\subsection{Definition and Examples}

\begin{definition}
    Given a norm $\|\cdot\|$, a \emph{normotope} is the set 
    \begin{align*}
        \normo{\ox,\alpha,y} := \{x\in\Rx : \|\alpha (x - \ox)\| \leq y\},
    \end{align*}
    where $\ox\in\Rx$ is the \emph{center}, $\alpha\in GL(\nx)$ is the square invertible $\nx\times\nx$ \emph{shape matrix}, and $y>0$ is the \emph{offset}.
\end{definition}

Normotopes are a special case of a \emph{parametope}, the general constraint-based set representation introduced in \cite{harapanahalli_parametric_2025}, and are dual to basic ellipsotopes~\cite{kousik_ellipsotopes_2022} (letting $z = \tfrac{\alpha}{y}(x - \ox)$, $\normo{\ox,\alpha,y} = \{y\alpha^{-1}z + \ox : \|z\| \leq 1\}$, which is a basic ellipsotope with center $\ox$ and generator matrix $y\alpha^{-1}$).

\begin{example}[$\ell_2$ normotopes]
The normotope
\begin{align*}
    \normo{\ox,\alpha,y}_2 &= \{x\in\Rx : \|\alpha(x - \ox)\|_2 \leq y\} \\
    &= \{x\in\Rx : (x - \ox)^T \alpha^T\alpha (x - \ox) \leq y^2\}
\end{align*}
is equivalent to an ellipsoid centered at $\ox$, with weighting matrix $\alpha^T\alpha$ and radius $y^2$.
$\alpha\in GL(n)$ implies $\alpha^T\alpha \succ 0$.
\end{example}

\begin{example}[$\ell_\infty$ normotopes]
The normotope
\begin{align*}
    \normo{\ox,\alpha,y}_\infty &= \{x\in\Rx : \|\alpha(x - \ox)\|_\infty \leq y\} \\
    &= \{x\in\Rx : -y\bfone \leq \alpha(x - \ox) \leq y\bfone\} \\
    &= \{x\in\Rx : \alpha\ox - y\bfone \leq \alpha x \leq \alpha\ox + y\bfone\}
\end{align*}
is equivalent to the H-rep polytope given by $\{x : Hx \leq b\}$, $H = \smallconc{\alpha}{-\alpha}$, $b = \smallconc{\alpha\ox + y\bfone}{-\alpha\ox + y\bfone}$, with $2n$ faces and $2^n$ vertices.
\end{example}

\begin{example}[$\ell_1$ normotopes]
The normotope
\begin{align*}
    \normo{\ox,\alpha,y}_1 &= \{x\in\Rx : \|\alpha(x - \ox)\|_1 \leq y\} \\
    &= \{x\in\Rx : -y\bfone \leq S\alpha(x - \ox) \leq y\bfone\} \\
    &= \{x\in\Rx : S\alpha\ox - y\bfone \leq S\alpha x \leq S\alpha\ox + y\bfone\}
\end{align*}
where $S\in\R^{2^\nx\times\nx}$ is the matrix consisting of all $2^\nx$ possible choices of signs, \ie, the $k$-th row $S_k = (s^k_1,\dots,s^k_n)$ where $s^k_j\in\{+1,-1\}$. 
is equivalent to the H-rep polytope given by $\{x : Hx \leq b\}$, $H = \smallconc{S\alpha}{-S\alpha}$, $b = \smallconc{S\alpha\ox + y\bfone}{-S\alpha\ox + y\bfone}$, with $2^n$ faces and $2n$ vertices.
\end{example}

\begin{remark}[Nomenclature]
    We choose the name ``normotope'' over ``norm-ball'' to emphasize how we expect the shape matrix $\alpha$ to vary along reachable set trajectories.
\end{remark}

\subsection{The LTV Case}

In the undisturbed linear time-varying case (LTV) with a normotope initial set, the adjoint equation provides the true reachable set of the system as a normotope with fixed offset.

\begin{proposition} \label{prop:LTV_reach}
For the LTV system
\begin{align*}
    \dot{x}(t) = A(t) x(t),
\end{align*}
the true reachable set is given by
\begin{align*}
    \calR_f(t;t_0,\normo{\ox_0,\alpha_0,y_0}) = \normo{\ox(t),\alpha(t),y_0},
\end{align*}
where $t\mapsto\ox(t),\alpha(t)$ satisfy the following ODEs,
\begin{align*}
    \dot{\ox}(t) &= A(t) \ox(t), \quad
    \dot{\alpha}(t) = -\alpha(t) A(t).
\end{align*}
with initial conditions $\ox(t_0) = \ox_0$, $\alpha(t_0) = \alpha_0$.
\end{proposition}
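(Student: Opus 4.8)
The plan is to verify directly that the claimed normotope $\normo{\ox(t),\alpha(t),y_0}$ equals the reachable set by tracking what happens to a single trajectory and appealing to the invertibility of $\alpha(t)$. First I would recall that for the LTV system $\dot x = A(t)x$, the state transition matrix $\Phi(t,t_0)$ is invertible, and every trajectory from $x_0$ is $x(t) = \Phi(t,t_0)x_0$. I would then observe that the ODEs for $\ox$ and $\alpha$ have explicit solutions in terms of $\Phi$: clearly $\ox(t) = \Phi(t,t_0)\ox_0$, and since $\frac{d}{dt}(\alpha(t)\Phi(t,t_0)) = \dot\alpha\Phi + \alpha\dot\Phi = -\alpha A\Phi + \alpha A\Phi = 0$, the product $\alpha(t)\Phi(t,t_0)$ is constant and equal to $\alpha_0$, so $\alpha(t) = \alpha_0\Phi(t,t_0)^{-1} = \alpha_0\Phi(t_0,t)$. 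In particular $\alpha(t)\in GL(\nx)$ for all $t$, so the right-hand side is a well-defined normotope.

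The core computation is then a change of variables inside the norm. A point $x$ lies in $\calR_f(t;t_0,\normo{\ox_0,\alpha_0,y_0})$ if and only if $x = \Phi(t,t_0)x_0$ for some $x_0$ with $\|\alpha_0(x_0 - \ox_0)\|\le y_0$. Using invertibility of $\Phi(t,t_0)$, I would substitute $x_0 = \Phi(t,t_0)^{-1}x = \Phi(t_0,t)x$ and compute
\begin{align*}
\alpha_0(x_0 - \ox_0) &= \alpha_0\Phi(t_0,t)(x - \Phi(t,t_0)\ox_0) \\
&= \alpha_0\Phi(t_0,t)(x - \ox(t)) = \alpha(t)(x - \ox(t)),
\end{align*}
where the last equality uses the identity $\alpha(t) = \alpha_0\Phi(t_0,t)$ derived above. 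Hence $x_0\in\normo{\ox_0,\alpha_0,y_0}$ if and only if $\|\alpha(t)(x-\ox(t))\|\le y_0$, i.e.\ $x\in\normo{\ox(t),\alpha(t),y_0}$. This establishes the set equality; since both inclusions fall out of the same biconditional, there is no separate over/under-approximation argument needed.

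The only real subtlety — and the step I would be most careful about — is justifying that the closed-form expressions $\ox(t) = \Phi(t,t_0)\ox_0$ and $\alpha(t) = \alpha_0\Phi(t_0,t)$ are indeed the (unique) solutions of the stated ODEs with the stated initial conditions, and that $\Phi(t,t_0)$ exists on the relevant interval; this is standard linear ODE theory (existence/uniqueness for linear time-varying systems, and the group property $\Phi(t_0,t)=\Phi(t,t_0)^{-1}$), but it is the hinge on which the change of variables turns. An alternative, slightly slicker route that avoids writing $\Phi$ explicitly is to note that $\normo{\ox(t),\alpha(t),y_0}$ is forward-invariant along the flow: one checks that if $\|\alpha(t)(x(t)-\ox(t))\| \le y_0$ then $\frac{d}{dt}[\alpha(t)(x(t)-\ox(t))] = \dot\alpha(x-\ox) + \alpha(\dot x - \dot\ox) = -\alpha A(x-\ox) + \alpha(Ax - A\ox) = 0$, so the quantity $\alpha(t)(x(t)-\ox(t))$ is constant in $t$ along every trajectory; evaluating at $t_0$ shows membership is preserved exactly in both directions, which again gives the equality. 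Either presentation works; I would likely present the change-of-variables version as the main argument and mention the invariance observation as the conceptual reason it works.
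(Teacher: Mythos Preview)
Your proposal is correct and follows essentially the same route as the paper: both arguments use the state transition matrix $\Phi(t,t_0)$, identify $\ox(t)=\Phi(t,t_0)\ox_0$ and $\alpha(t)=\alpha_0\Phi(t_0,t)$, and then perform the change of variables $x_0=\Phi(t_0,t)x$ to obtain the set equality. The only cosmetic difference is ordering (you first verify the closed forms solve the ODEs and then do the set computation, while the paper does the set computation first and then differentiates $\alpha_0\Phi(t_0,t)$); your invariance remark that $\alpha(t)(x(t)-\ox(t))$ is constant along trajectories is a nice extra observation not in the paper.
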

\begin{proof}
We first recall how the flow map is given by $\phi_f(t;t_0,x_0) = \Phi(t;t_0) x_0$, where $\Phi$ is the state transition matrix satisfying the matrix ODE
\begin{align*}
    \dot{\Phi}(t;t_0) = A(t) \Phi(t;t_0), \quad \Phi(t_0;t_0) = \bfI,
\end{align*}
with the property $\Phi^{-1}(t;t_0) = \Phi(t_0;t)$.
Thus, the true reachable set is given by
\begin{align*}
    &\calR(t;t_0,\normo{\ox_0,\alpha_0,y_0})
    = \{\Phi(t;t_0)x : \|\alpha_0(x - \ox_0)\| \leq y_0\} \\
    &= \{z : \|\alpha_0(\Phi(t_0;t)z - \ox_0)\| \leq y_0\} \\
    &= \{z : \|\alpha_0\Phi(t_0;t)(z - \Phi(t;t_0)\ox_0)\| \leq y_0\} \\
    &= \normo{\Phi(t;t_0)\ox, \alpha_0\Phi(t_0;t), y_0} = \normo{\ox(t),\alpha(t),y_0},
\end{align*}
where $\alpha(t)$ satisfies the following,
\begin{align*}
    &\dot{\alpha}(t) = \alpha_0 \frac{\rmd}{\rmd t} \Phi(t;t_0)^{-1} 
    = - \alpha_0 \Phi(t;t_0)^{-1} \dot{\Phi}(t;t_0) \Phi(t;t_0)^{-1} \\
    &= -\alpha_0 \Phi(t_0;t) A(t) \Phi(t;t_0) \Phi(t;t_0)^{-1} = -\alpha(t) A(t),
\end{align*}
and $\ox(t)$ satisfies $\dot{\ox}(t) = A(t)\ox(t)$.
\end{proof}
We can interpret the ODE $\dot{\alpha}(t) = -\alpha(t) A(t)$ as each row $\alpha_j^T$ evolving according to the adjoint dynamics $\dot{\alpha}_j(t) = -A(t)^T \alpha_j(t)$.

In the nonlinear systems case, the true reachable set is generally not itself a normotope.
In the next section, we demonstrate how a similar embedding system approach can obtain guaranteed overapproximating normotope reachable sets by adding an appropriate dynamic compensation term to the offset $y$ using logarithmic and induced matrix norms. 

\section{Normotope Embeddings}

In Proposition~\ref{prop:LTV_reach}, we showed that the true reachable set in the linear time-varying case was given by a single forward simulation of another ODE (what we will refer to as an \emph{embedding system}), by evolving the center $\ox$ using the system dynamics and the rows of the shape matrix $\alpha$ using the adjoint dynamics. 
In this section, for nonlinear systems, we demonstrate how a similar \emph{controlled embedding} approach can be used to bound the behavior of any nonlinear dynamics.

\subsection{Dynamics Abstraction: Linear Differential Inclusions}

A \emph{linear differential inclusion} (LDI) \cite[p.51]{boyd_linear_1994} encompassing the error dynamics of the system~\eqref{eq:nlsys} is an inclusion where for every $x\in\calX$ and $w\in\calW$, for prespecified $\ox\in\calX\subseteq\Rx$ and $\ow\in\calW\subseteq\Rw$,
\begin{align} \label{eq:LDI}
    f(t,x,w) - f(t,\ox,\ow) \in \calM^x(t) (x - \ox) + \calM^w(t) (w - \ow),
\end{align}
where $\calM^x(t)\subseteq\R^{\nx \times \nx}$ and $\calM^w(t)\subseteq\R^{\nx \times \nw}$ are sets of matrices.
There are many ways to obtain matrix sets satisfying~\eqref{eq:LDI}.
For example, if $\overline{\co}\{\frac{\partial f}{\partial x}(t,x,w)\} \subseteq \calM^x(t)$ and $\ol{\co}\{\frac{\partial f}{\partial w}(t,x,w)\}\subseteq\calM^w(t)$ for every $(x,w)\in\calX\times\calW$ (where $\ol{\co}$ is the closed convex hull), then~\eqref{eq:LDI} holds for every $(x,w)\in\calX\times\calW$ \cite[Prop. 1]{harapanahalli_linear_2024}, \cite[p.55]{boyd_linear_1994}.

We briefly discuss an algorithmic method from~\cite[Cor. 1]{harapanahalli_linear_2024} to obtain the matrices $\calM^x$ and $\calM^w$ using interval analysis on the first partial derivatives of $f$. 
Suppose we are bounding a $C^1$ function $g:\R^n\to\R^m$, and we are given given an interval set $Z_1\times\cdots\times Z_n\subseteq\R^n$. 
Then the following implication holds, for fixed $\ringz\in Z_1\times\cdots\times Z_n$,
\begin{align*}
    &\frac{\partial g_i}{\partial x_j} (Z_1,\dots,Z_j,\mathring{z}_{j+1},\dots,\mathring{z}_n) \subseteq [\calM]_{ij}  \\
    &\implies g(z) - g(\mathring{z}) \in [\calM] (z - \mathring{z}),
\end{align*}
for every $z\in Z_1\times\cdots Z_n$~\cite[Cor. 1]{harapanahalli_linear_2024}, where the RHS is evaluated using interval matrix multiplication~\cite{jaulin_applied_2001}.
We can apply this procedure to the map $f(t,\cdot,\cdot):\Rx\times\Rw\to\Rx$ by defining an equivalent map $\hat{f}_t : \R^{\nx + \nw} \to\Rx$ to obtain the bound
\begin{align*}
    f(t,x,w) - f(t,\ox,\ow) \in [\calM(t)] \begin{bsmallmatrix} x - \ox \\ w - \ox \end{bsmallmatrix},
\end{align*}
and finally, we can split $[\calM(t)] \subseteq \R^{\nx \times (\nx + \nw)}$ into two interval matrices $[\calM^x(t)]\subseteq\R^{\nx\times\nx}$  and $[\calM^w(t)]\subseteq\R^{\nx\times\nw}$.

We simplify the expression by replacing the interval matrices $[\calM^x(t)]$ and $[\calM^w(t)]$ with the convex hull of a finite set of corners. For example, if the interval matrix $[\calM^x(t)]$ has $4$ non-singleton entries, then there are $2^4 = 16$ corners $M_i^x(t)$ to consider.
Thus, we obtain a bound of the following form,
\begin{align*}
    f(t,x,w) - & f(t,\ox,\ow) \in \\
    & \co\{M_i^x(t)\}_i (x - \ox) + \co\{M_j^w(t)\}_j (w - \ow).
\end{align*}
As we will see in Theorem~\ref{thm:main_normotope_thm} below, these finite sets will help us build a simple embedding system by evaluating logarithmic and induced matrix norms over each corner.

\subsection{Controlled Embedding System}

In Theorem~\ref{thm:main_normotope_thm}, we apply build a particularly well structured embedding system for the special case of a normotope set with a linear differential inclusion bounding the error dynamics to the center of the normotope.

\begin{theorem}[Controlled normotope embedding] \label{thm:main_normotope_thm}
Let $\|\cdot\|$ be a differentiable norm, the $\ell_1$-norm, or the $\ell_\infty$-norm.
Let $\mu$ be the induced logarithmic norm.
Define the following controlled embedding system,
\begin{subequations}\label{eq:embsys}
\begin{align} 
    \dot{\ox}(t) &= f(t,\ox,\ow), \\
    \dot{\alpha}(t) &= U(t,\ox,\alpha,y), \\
\begin{split} 
    \dot{y}(t) &= \left(\max_i \mu (U(t,\ox,\alpha,y) \alpha^{-1} + \alpha M^x_i(t) \alpha^{-1})\right) y \\
    &\quad\quad + \max_j \|\alpha M^w_j(t)\|_{w\to x},
\end{split}\label{eq:embsys:offset}
\end{align}
\end{subequations}
where $f(t,x,w) - f(t,\ox,\ow) \in \co\{M^x_i(t)\}(x - \ox) + \co\{M^w_i(t)\}(w - \ow)$ for every $(x,w)\in\partial\normo{\ox,\alpha,y}\times\calW$.
Then for every $t\geq 0$,
\begin{align*}
    \calR_f(t,\normo{\ox_0,\alpha_0,y_0},\calW) \subseteq \normo{\ox(t),\alpha(t),y(t)},
\end{align*}
where $t\mapsto(\ox(t),\alpha(t),y(t))$ is the trajectory of the system~\eqref{eq:embsys} under any locally Lipschitz map $U$.
\end{theorem}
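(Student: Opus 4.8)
The plan is to reduce the set inclusion to a one-trajectory statement and then run a comparison (Gr\"onwall / Nagumo-type) argument driven by a logarithmic-norm inequality. Fix $x_0\in\normo{\ox_0,\alpha_0,y_0}$ and a measurable disturbance $\bfw:[0,t]\to\calW$, write $x(s)=\phi_f(s;0,x_0,\bfw)$, and let $s\mapsto(\ox(s),\alpha(s),y(s))$ denote the trajectory of~\eqref{eq:embsys} — which exists and is unique, with $\alpha$ of class $C^1$, since $U$ is locally Lipschitz; we take for granted that $\alpha(s)$ remains invertible so that~\eqref{eq:embsys} is well posed. Because $x(s)\in\normo{\ox(s),\alpha(s),y(s)}$ is equivalent to $v(s):=\|\alpha(s)(x(s)-\ox(s))\|\le y(s)$, it suffices to prove this inequality for every $s\in[0,t]$. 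Note that $v$ is absolutely continuous (a locally Lipschitz function of the absolutely continuous curve $s\mapsto(\alpha(s),x(s),\ox(s))$) and that $v(0)\le y_0=y(0)$.

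Second, I would estimate the upper Dini derivative $D^+v(s)$ at any time $s$ for which $x(s)\in\partial\normo{\ox(s),\alpha(s),y(s)}$, that is, $v(s)=y(s)$. With $z(s)=\alpha(s)(x(s)-\ox(s))$ and $\dot x=f(s,x,\bfw(s))$, $\dot{\ox}=f(s,\ox,\ow)$, $\dot\alpha=U$, we get $\dot z=U(x-\ox)+\alpha\big(f(s,x,\bfw(s))-f(s,\ox,\ow)\big)$, and the boundary hypothesis~\eqref{eq:LDI} supplies $M^x\in\co\{M^x_i(s)\}$, $M^w\in\co\{M^w_j(s)\}$ with $f(s,x,\bfw(s))-f(s,\ox,\ow)=M^x(x-\ox)+M^w(\bfw(s)-\ow)$. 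Substituting $x-\ox=\alpha^{-1}z$ yields $\dot z=(U\alpha^{-1}+\alpha M^x\alpha^{-1})z+\alpha M^w(\bfw(s)-\ow)=:Az+b(s)$, and the elementary norm-derivative bound — from $\|z(s+h)\|\le\|I+hA\|_{x\to x}\|z(s)\|+h\|b(s)\|+o(h)$ and $\limsup_{h\searrow0}$ — gives $D^+v(s)\le\mu(U\alpha^{-1}+\alpha M^x\alpha^{-1})\,v(s)+\|\alpha M^w(\bfw(s)-\ow)\|$. (This is the step that restricts $\|\cdot\|$ to norms whose $\mu$ and $\|\cdot\|_{w\to x}$ are tractable.)

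Third, I would discharge the convex hulls. Since $\mu$ is subadditive and positively homogeneous (hence convex), $\mu$ of any convex combination of the matrices $U\alpha^{-1}+\alpha M^x_i\alpha^{-1}$ is at most $\max_i\mu(U\alpha^{-1}+\alpha M^x_i\alpha^{-1})$; similarly $\|\alpha M^w(\bfw(s)-\ow)\|\le\max_j\|\alpha M^w_j(s)\|_{w\to x}\sup_{w\in\calW}\|w-\ow\|$, the last factor being $1$ under the normalization of $\calW$ (or absorbed into the $M^w_j$). Using $v(s)=y(s)$, the resulting upper bound is precisely the right-hand side of~\eqref{eq:embsys:offset}, so $D^+v(s)\le\dot y(s)$ at every time $s$ where $v(s)=y(s)$.

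Finally, I would conclude by comparison: from $v(0)\le y(0)$, from $D^+v(s)\le\dot y(s)$ whenever the curves meet, and from local Lipschitzness in $y$ of the right-hand side of~\eqref{eq:embsys:offset} (inherited from $U$, with $\mu$ being $1$-Lipschitz in the operator norm), we get $v(s)\le y(s)$ on $[0,t]$; taking the union over $x_0$ and $\bfw$ completes the proof. I expect the main obstacle to be making this last step rigorous: the hypothesis constrains $\dot x(s)$ only when $x(s)$ lies on $\partial\normo{\ox(s),\alpha(s),y(s)}$, so the plain comparison lemma (which wants $D^+v(s)\le g(s,v(s))$ for \emph{all} $s$) does not apply verbatim. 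The proper fix is a Nagumo/viability-type invariance argument for the tube $\{(s,\xi):\xi\in\normo{\ox(s),\alpha(s),y(s)}\}$, whose subtangency condition along the boundary is exactly the inequality $D^+v(s)\le\dot y(s)$ derived above, together with care over the null set of times where the measurable $\bfw$ is ill behaved and over the non-smooth points of the $\ell_1$ and $\ell_\infty$ norms (where $D^+$ replaces $\tfrac{d}{ds}$).
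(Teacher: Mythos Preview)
Your proposal is correct and follows the same high-level strategy as the paper: compute an upper bound on the time-derivative of the constraint function $\|\alpha(x-\ox)\|$ at boundary points, use the definition of $\mu$ to extract the logarithmic-norm term, invoke convexity of $\mu$ and of $\|\cdot\|_{w\to x}$ to pass from $\co\{M_i\}$ to $\max_i$, and close with a comparison/invariance argument. The paper outsources the last step to \cite[Cor.~1]{harapanahalli_parametric_2025}; you sketch the Nagumo argument by hand, and you are right that the boundary-only hypothesis forces a tangent-cone formulation rather than a plain Gr\"onwall lemma. The one substantive difference is your uniform treatment of the three norm classes: the paper proves the differentiable case via the chain rule, then handles $\ell_\infty$ and $\ell_1$ separately by rewriting each normotope as an H-polytope and arguing coordinate-by-coordinate (with the $\ell_1$ case reduced to $\ell_\infty$ via the sign matrix $S$). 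Your Dini-derivative bound $D^+\|z\|\le\mu(A)\|z\|+\|b\|$, valid for \emph{any} norm, sidesteps this case split entirely and is cleaner. Both arguments share the same unstated normalization on the disturbance: the step $\|\alpha M^w(w-\ow)\|\le\|\alpha M^w\|_{w\to x}$ tacitly assumes $\sup_{w\in\calW}\|w-\ow\|\le 1$ in whatever norm equips $\R^{\nw}$, which neither you nor the paper makes explicit.
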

\begin{proof}
((i) $\|\cdot\|$ is differentiable).
For $(x,w)\in\partial\normo{\ox,\alpha,y}\times\calW$, there is $M^x\in\co\{M^x_i\}_i$ and $M^w\in\co\{M^w_j\}_j$ such that
\begin{align*}
    &\xi(t,x,w) := \partial_{\alpha} g [U] + \partial_x g (M^x (x - \ox) + M^w (w - \ow)) \\
    &= \tfrac{\partial \|\cdot\|}{\partial z}\big|_{\alpha(x - \ox)} (U(x - \ox) + \alpha (M^x (x - \ox) + M^w (w - \ow))) \\
    &= \lim_{h\to0} \tfrac{\|\alpha(x - \ox) + hU(x - \ox) + hM^x(x - \ox) + hM^w(w - \ow)\| - \|\alpha(x - \ox)\|}{h} \\
    &\leq \limsup_{h\searrow 0} \tfrac{\|(I + h(U\alpha^{-1} + \alpha M^x\alpha^{-1})) \alpha(x-\ox)\| - \|\alpha(x - \ox)\|}{h} \\
    &\quad + \|\alpha M^w(w - \ow)\| \\
    &\leq \limsup_{h\searrow0} \tfrac{\|I + h(U\alpha^{-1} + \alpha M^x\alpha^{-1})\|_{x\to x} - 1}{h} \|\alpha(x - \ox)\| \\
    &\quad + \|\alpha M^w(w - \ow)\| \\
    &= \mu(U\alpha^{-1} + \alpha M^x\alpha^{-1}) y + \|\alpha M^w\|_{\operatorname{op}} \\
    &\leq \max_i\mu(U\alpha^{-1} + \alpha M^x_i \alpha^{-1}) y + \max_j\|\alpha M^w_j\|_{w\to x}.
\end{align*}
where the final inequality holds since $\mu$ and $\|\cdot\|_{w\to x}$ are convex functions~\cite[Lemma 2.3]{bullo_contraction_2023} with arguments linear in $M^x$ and $M^w$ respectively.
\cite[Cor. 1]{harapanahalli_parametric_2025} concludes the proof.

((ii) $\|\cdot\| = \|\cdot\|_\infty$) We first note that the normotope $\{\|\alpha(x - \ox)\|_\infty \leq y\}$ is equivalent to the symmetric H-rep polytope $\{-\hat{y} \leq \alpha(x - \ox) \leq \hat{y}\}$ where $\hat{y} = y\bfone\in\Rx$. 
Then for any $x$ satisfying $-\hat{y} \leq \alpha(x - \ox) \leq \hat{y}$ and $|\alpha_k(x - \ox)| = y$ for some $k$ (where $\alpha_k$ is the $k$-th row of $\alpha$),
\begin{align*}
    &\xi_k(t,x,w) := \dot{\alpha}_k (x - \ox) + \alpha_k(f(t,x,w) - f(t,\ox,\ow)) \\
    &= \dot{\alpha}_k (x - \ox) + \alpha_k (M^x (x - \ox) + M^w (w - \ow)) \\
    &= (\dot{\alpha}_k \alpha^{-1} + \alpha_k M^x \alpha^{-1}) \alpha (x - \ox) + \alpha_k M^w (w - \ow) \\
    &\leq \big((\dot{\alpha}_k \alpha^{-1} + \alpha_k M^x \alpha^{-1})_k \\
    &\quad + \textstyle\sum_{j\neq k} |(\dot{\alpha}_k \alpha^{-1} + \alpha_k M^x \alpha^{-1})_j|\big) y + \alpha_k M^w (w - \ow) \\
    &\leq \mu_\infty (U \alpha^{-1} + \alpha M^x \alpha^{-1}) y + \|\alpha M^w\|_{w\to x} \\
    &\leq \max_i \mu_\infty (U \alpha^{-1} + \alpha M^x_i \alpha^{-1}) y + \max_j \|\alpha M^w_j\|_{w\to x} 
\end{align*}
by definition of the $\mu_\infty$ log norm~\cite[Table 2.1]{bullo_contraction_2023}, and concluding with the same logic as part (i).
The resulting H-polytope embedding recovers the same trajectory as \eqref{eq:embsys} with the equivalence $\hat{y} = y\bfone$.

((iii) $\|\cdot\| = \|\cdot\|_1$) We first note that 
\begin{align*}
    \|x\|_1 &= \textstyle\sum_{i} |x_i| = \max_{s_j\in S} (s_j^T x) = \|Sx\|_\infty,
\end{align*}
where $S\in\R^{2^\nx\times\nx}$ is the matrix consisting of all $2^\nx$ possible choices of signs, \ie, the $k$-th row $S_k = (s^k_1,\dots,s^k_n)$ where $s^k_j\in\{+1,-1\}$. 
Thus, the normotope $\{\|\alpha(x - \ox)\|_1 \leq y\}$ is equivalent to the symmetric H-rep polytope given by $\{x : -\hat{y} \leq S \alpha (x - \ox) \leq \hat{y}\}$, where $\hat{y} = y\bfone$.
Let $S^+$ be a left inverse of $S$ ($S^+S = \bfI$).
Then for any $x$ satisfying $-\hat{y} \leq S\alpha(x - \ox) \leq \hat{y}$ and $|S_k\alpha(x - \ox)| = y$ for some $k$ (where $\alpha_k$ is the $k$-th row of $\alpha$),
\begin{align*}
    &\xi_k(t,x,w) = (S_k \dot{\alpha}) (x - \ox) \\
    &\quad + S_k\alpha (M^x (x - \ox) + M^w (w - \ow)) \\
    &= (S_k\dot{\alpha} \alpha^{-1} S^+ + S_k \alpha M^x \alpha^{-1} S^+) S\alpha (x - \ox) \\
    &\quad + S^k\alpha M^w(w - \ow) \\
    &\leq \mu_\infty(S(U\alpha^{-1} + \alpha M^x \alpha^{-1})S^+) y + \|S\alpha M^w\|_{\infty\to w},
\end{align*}
concluding with the logic of part (ii).
Finally, for any $A$, 
\begin{align*}
    \|SAS^+\|_\infty &= \sup_{z : \|z\|_\infty = 1} \|SAS^+z\|_\infty = \sup_{x : \|Sx\|_\infty = 1} \|SAx\|_\infty \\
    &= \sup_{x : \|x\|_1} \|Ax\|_1 = \|A\|_1,
\end{align*}
thus, plugging into their definitions, we see that $\mu_\infty(SAS^+) = \mu_1 (A)$ and $\|SA\|_{w\to\infty} = \|A\|_{w\to 1}$.
\end{proof}

In practice, we first overapproximate the normotope $\normo{\ox(t),\alpha(t),y(t)}$ with an interval subset, then apply the methodology from the previous section to obtain the matrices $\{M_i^x(t)\}_i$ and $\{M_j^w(t)\}_j$.
Finally, we note that the dynamics~\eqref{eq:embsys:offset} are not the only possible choice of embedding dynamics; in fact, any expression satisfying the hypotheses of~\cite[Cor. 1]{harapanahalli_parametric_2025} would constitute a valid embedding.
However, when the dynamics are abstracted using a linear differential inclusion as~\eqref{eq:LDI}, the expression~\eqref{eq:embsys:offset} is a natural choice with direct connections to contraction analysis~\cite{sontag_contractive_2010,davydov_non-euclidean_2022,bullo_contraction_2023}.

\begin{remark}[Connection to contraction theory]
The $U = 0$ case of Theorem~\ref{thm:main_normotope_thm} recovers the fixed norm results from~\cite{maidens_reachability_2014,fan_simulation-driven_2017,harapanahalli_linear_2024} for the norm with fixed shape matrix $\|x\|_{\alpha} = \|\alpha x\|$.
The introduction of the hypercontrol $U$ allows us to continuously vary the shape matrix along the trajectory of the normotope embedding system.
\end{remark}

\section{An Optimal Control Perspective Towards Reachable Set Computation}

In this section, we synthesize the policy $U$ from~\eqref{eq:embsys} using an optimal control formulation.
For clarity of presentation, we omit the disturbance input in this section and consider the nonlinear system
\begin{align*}
    \dot{x} = f(t,x),
\end{align*}
where $f : \R\times\Rx\to\Rx$ is $C^1$ differentiable, but we note that all results in this section can easily be generalized to the case with a compact disturbance.

As seen in Proposition~\ref{prop:LTV_reach}, for LTV systems the best choice of embedding dynamics is $\dot{\alpha} = -\alpha A(t)$. 
We choose hypercontrol policies of the form
\begin{align*}
    U(t,\ox,\alpha,y) = -\alpha \frac{\partial f}{\partial x}(\ox) + \Uff(t),
\end{align*}
where $\Uff$ is a feed-forward input to the embedding system. 
As discussed in~\cite{harapanahalli_immrax_2024}, closing the loop with the adjoint of the linearization cancels the first order expansion of the dynamics and generally provides reasonable results when the offset is small and the system is close to linear.
In the rest of this section, using the controlled embedding system from the previous section, we formulate an optimal control problem and use iterative dynamic programming to optimize the accuracy of our reachable set overapproximations.

\subsection{Cost Function Design}

Suppose we are given an initial set $\normo{\ox_0,\alpha_0,y_0}$ at $t=t_0$, and we are interested in verifying a goal specification at $t=t_f$.
For a goal specification, we are purely interested in the size of the reachable set at the final time, and therefore, we design an optimal control problem with a pure terminal cost.
The following Lemma recalls a standard result whereby maximizing the log-determinant of the shape matrix minimizes the volume of the parametope set.
\begin{lemma}
For any norm $\|\cdot\|$, a minimizer of the objecive function 
\begin{align} \label{eq:terminal_cost}
    \Phi(\ox,\alpha,y) = - \log\det (\alpha^T\alpha/y^2)
\end{align}
minimizes the volume of the normotope set 
\begin{align*}
    \operatorname{vol}(\normo{\ox,\alpha,y}).
\end{align*}
\end{lemma}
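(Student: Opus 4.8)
The plan is to exploit the fact, already recorded just after the definition of a normotope, that every normotope is an affine image of the unit ball of $\|\cdot\|$; once the volume is written in closed form, the claim reduces to observing that $\Phi$ is a strictly increasing affine function of the logarithm of that volume.

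First I would fix the norm and write $B = \{z\in\Rx : \|z\| \le 1\}$ for its unit ball. Since $\alpha\in GL(\nx)$ and $y>0$, the substitution $z = \tfrac{\alpha}{y}(x-\ox)$ is a linear bijection of $\Rx$, and it gives $\normo{\ox,\alpha,y} = \{\ox + y\alpha^{-1}z : z\in B\}$, i.e. the image of $B$ under the affine map $z\mapsto \ox + y\alpha^{-1} z$. The unit ball of any norm is bounded, convex, and contains a neighborhood of the origin, so $c_{\|\cdot\|} := \operatorname{vol}(B)$ is a finite, strictly positive constant depending only on the norm. Applying the linear-image formula for Lebesgue measure (the Jacobian determinant of the affine map being $\det(y\alpha^{-1}) = y^\nx/\det\alpha$) then yields
\begin{align*}
    \operatorname{vol}(\normo{\ox,\alpha,y}) = \big|\det(y\alpha^{-1})\big|\,\operatorname{vol}(B) = \frac{y^\nx}{|\det\alpha|}\,c_{\|\cdot\|},
\end{align*}
which in particular is independent of the center $\ox$.

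Next I would relate this to $\Phi$. Because $\det(\alpha^T\alpha/y^2) = \det(\alpha^T\alpha)/y^{2\nx} = (\det\alpha)^2/y^{2\nx}$, we get
\begin{align*}
    \Phi(\ox,\alpha,y) = -\log\frac{(\det\alpha)^2}{y^{2\nx}} = 2\log\frac{y^\nx}{|\det\alpha|} = 2\log\operatorname{vol}(\normo{\ox,\alpha,y}) - 2\log c_{\|\cdot\|}.
\end{align*}
Thus $\Phi$ is obtained from $\operatorname{vol}(\normo{\ox,\alpha,y})$ by the map $v\mapsto 2\log v - 2\log c_{\|\cdot\|}$, which is strictly increasing on $(0,\infty)$; hence a feasible triple minimizes $\Phi$ if and only if it minimizes the volume, proving the Lemma.

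There is no substantial obstacle here; the only points that deserve a sentence of care are that the reference volume $c_{\|\cdot\|}$ is finite and strictly positive — which is exactly the statement that the unit ball of a norm is a bounded convex body with nonempty interior — and that, since $\Phi$ does not depend on $\ox$, the word ``minimizer'' should be understood over the shape and offset variables, which is harmless because the volume is translation-invariant anyway.
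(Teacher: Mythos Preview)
Your proof is correct and follows essentially the same approach as the paper: both write the normotope as the affine image $\{\ox + y\alpha^{-1}z : \|z\|\le 1\}$ of the unit ball, apply the Jacobian formula for Lebesgue measure to get $\operatorname{vol}(\normo{\ox,\alpha,y}) = |\det(y\alpha^{-1})|\operatorname{vol}(B)$, and then observe that $\Phi$ is a strictly increasing function of this volume. Your version is slightly more explicit in writing out $\Phi = 2\log\operatorname{vol} - 2\log c_{\|\cdot\|}$ and in noting that $c_{\|\cdot\|}$ is finite and positive, but the argument is the same.
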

\begin{proof}
    The normotope set
    \begin{align*}
        \normo{\ox,\alpha,y} 
        &= \{x : \|\alpha(x - \ox)\| \leq y\} 
        = \{x : \|\tfrac{\alpha}{y}(x - \ox)\| \leq 1\}  \\
        &= \{(\tfrac{\alpha}{y})^{-1}z + \ox : \|z\| \leq 1\}
    \end{align*}
    is an affine image of the norm ball $\calB_1(0)$. Thus, the volume
    \begin{align*}
        \operatorname{vol}(\normo{\ox,\alpha,y}) = |\det((\tfrac{\alpha}{y})^{-1})| \operatorname{vol}(\calB_1(0))
    \end{align*}
    is inversely proportional to $|\det(\tfrac{\alpha}{y})|$. Since $\log((\cdot)^2)$ is a monotone function for positive inputs, we can maximize
    \begin{align*}
        \log(|\det(\tfrac{\alpha}{y})|^2) = \log (\det(\tfrac{\alpha^T}{y})\det(\tfrac{\alpha}{y})) = \log \det (\tfrac{\alpha^T\alpha}{y^2}),
    \end{align*}
    completing the proof.
\end{proof}

We implement normotopes and the terminal cost using JAX for Python, taking care to respect the stringent requirements of JAX, which provides us several crucial capabilities.
Normotopes are natively implemented as a Pytree node class, allowing us to use them as a valid JAX type for operations such as automatic differentiation.
For the terminal cost~\eqref{eq:terminal_cost}, we use \verb|jnp.linalg.cholesky| instead of composing \verb|log| and \verb|det| for numerical stability.

\subsection{Open-Loop Hypercontrol Synthesis: Iterative Linear Quadratic Regulator}

\newcommand{\ALGINPUT}{\textbf{Input:} }
\begin{algorithm}[tb]
\caption{Reach-iLQR} \label{alg:Reach-iLQR}
\textbf{Input:} initial set $X_0 = \normo{\ox_0,\alpha_0,y_0}$, time interval $[t_0,t_f]$ \\
\textbf{Parameters:} step size $\gamma>0$, regularizer $R\succ 0$, volume threshold $\Phi_\text{max}\in\R$
\begin{algorithmic}[1]
\STATE $\Uff^{(0)}(t) \gets 0$, $d(t) \gets 0$, $K(t) \gets 0$
\STATE $i\gets 1$
\REPEAT 
    \STATE Forward simulate under the updated control $\Uff^{(i)}$ from~\eqref{eq:control_update} to obtain $X^{(i)}(t)$ until $\Phi > \Phi_\text{max}$ or $t = t_f$.
    \STATE Compute terminal conditions \eqref{eq:terminal_conditions} using automatic differentiation on terminal cost $\Phi$ from \eqref{eq:terminal_cost}.
    \STATE Backward simulate Ricatti equations \eqref{eq:backward_pass} using automatic differentiation to obtain $d(t),K(t)$ from~\eqref{eq:d_K}.
    \STATE $i\gets i + 1$
\UNTIL {termination criterion reached}
\end{algorithmic}
\end{algorithm}
Suppose we are given an initial set $\normo{\ox_0,\alpha_0,y_0}$. 
In this section, we apply the continuous-time iterative linear quadratic regulator approach (iLQR)~\cite{lieskovsky_continuous-time_2025} to numerically optimize the following optimal control problem.
\begin{align}
    &\minimize_{\Uff:[t_0,t_f]\to\R^{\nx\times\nx}} \ \Phi(\ox(t_f),\alpha(t_f),y(t_f)) \label{eq:OCP} \\
    &\text{s.t. }\quad f(t,x) - f(t,\ox) \in \co\{M_i^x(t)\} (x - \ox), \nonumber \\
    &\begin{aligned}
    \dot{\ox}(t) &= f(t,\ox), \\
    \dot{\alpha}(t) &= -\alpha \tfrac{\partial f}{\partial x}(t,\ox) + \Uff(t) \\
    \dot{y}(t) &= \left(\max_{i} \mu(\Uff(t) \alpha^{-1} + \alpha (M_i^x(t) - \tfrac{\partial f}{\partial x}(t,\ox)) \alpha^{-1})\right) y \\
    \end{aligned} \nonumber \\
    & (\ox(t_0),\alpha(t_0),y(t_0)) = (\ox_0,\alpha_0,y_0) \nonumber
\end{align}

For the rest of this section, we write $X = \operatorname{vec}(\ox_0,\alpha_0,y_0) \in \R^{n_X} = \R^{\nx + n_x^2 + 1}$, and we abbreviate the embedding dynamics~\eqref{eq:embsys} as $\dot{X} = F(t,X,U)$. 

Next, we describe Algorithm~\ref{alg:Reach-iLQR}, which uses continuous-time iterative linear quadratic regulation (iLQR) to improve the feedforward term $\Uff$.
Suppose we are at timestep $i$, given $\Uff^{(i)}(t)$ and the corresponding embedding system trajectory $X^{(i)}(t)$ from forward simulating the embedding dynamics~\eqref{eq:embsys}.

\subsubsection{Backward Pass}
Since we have no running cost, the backwards pass simplifies to the following ODEs,
\begin{subequations} \label{eq:backward_pass}
\begin{align}
    -\dot{\sigma}(t) &= - \tfrac12 Q_U^T(t) R^{-1} Q_U(t), \\
    -\dot{s}(t) &= F_X^T (t) s(t) - Q_{UX}^T(t) R^{-1} Q_U(t), \\
    -\dot{S}(t) &= Q_{XX}(t) - Q_{UX}^T(t) R^{-1} Q_{UX}(t),
\end{align}
\end{subequations}
where $R\succ 0$ is some regularization matrix,
\begin{align*}
    Q_U(t) &= F_U^T(t) s(t), \\
    Q_{XX}(t) &= F_X^T(t) S(t) + S(t) F_X(t), \\
    Q_{UX}(t) &= F_U^T(t) S(t),
\end{align*}
and the following
\begin{align*}
    F_U^T(t) &= \frac{\partial F}{\partial U} (t,X^{(i)}(t),\Uff^{(i)}(t)), \\
    F_X^T(t) &= \frac{\partial F}{\partial X} (t,X^{(i)}(t),\Uff^{(i)}(t)),
\end{align*}
are computed using automatic differentiation in JAX.
The ODEs are initialized by automatically differentiating the terminal cost $\Phi$ with respect to the state $X$ as
\begin{subequations}\label{eq:terminal_conditions}
\begin{align} 
    \sigma(t_f) &= \Phi(X(t_f)), \\
    s(t_f) &= \frac{\partial \Phi}{\partial X}(X(t_f))^T, \\
    S(t_f) &= \frac{\partial^2 \Phi}{\partial X^2}(X(t_f)).
\end{align}
\end{subequations}
Finally, we obtain curves $d(t)$ and $K(t)$ as
\begin{align} \label{eq:d_K}
    d(t) = R^{-1} Q_U(t), \quad K(t) = R^{-1} Q_{UX}(t).
\end{align}
The key result from iLQR is that the variational system (first order expansion of the dynamics)
\begin{align*}
    \dot{\delta X} = F_X^T(t) \delta X(t) + F_U^T(t) \delta U(t)
\end{align*}
and the second order expansion of the value function
\begin{align*}
    V(x,t) \approx \sigma(t) + s(t)^T \delta X(t) + \tfrac12 \delta X(t)^T S(t) \delta X(t)
\end{align*}
satisfy the Hamilton-Jacobi-Bellman PDE with feedback controller $\delta U(t) = -d(t) - K(t)\delta X(t)$ (for the full derivation, please see~\cite{lieskovsky_continuous-time_2025}).

\subsubsection{Forward Pass}

From the backward pass, we obtain the curves $d(t)$ and $K(t)$. 
We then perform another forward sweep by updating $\Uff$ using the following (where $i$ has been incremented),
\begin{align} \label{eq:control_update}
    \Uff^{(i)}(t) = \Uff^{(i-1)}(t) - \gamma d(t) - K(t) (X^{(i)}(t) - X^{(i-1)}(t)),
\end{align}
where $\gamma>0$ is the step size, $X^{(i-1)}$ is the previous embedding trajectory under $\Uff^{(i-1)}$, and $X^{(i)}$ is the new embedding trajectory under $\Uff^{(i)}$. 
Theoretically, when $\gamma$ is small enough, the displacement $\delta X = X^{(i)} - X^{(i-1)}$ along the trajectory is small enough where the first order dynamics expansion and second order value expansion remain accurate.

While the forward pass always produces a valid reachable set overapproximation, it is often possible that the reachable set blows up before the prescribed final time $t_f$.
To combat this situation, we introduce a threshold $\Phi_\text{max}$, where we stop the forward simulation if $\Phi(X(t)) > \Phi_\text{max}$. 
When this happens, we proceed as if the final time $t_f$ was the time instance before the threshold was violated; in practice, we find that iterates will eventually tend towards policies which reach the final time.
The result is an algorithm where each iteration provides a valid overapproximate reachable tube, tending towards those with smaller terminal set volumes.

\begin{remark}[Comparison to the literature]
Previous approaches~\cite{maidens_reachability_2014,fan_simulation-driven_2017,harapanahalli_linear_2024} break the time interval $[t_0,t_f]$ into segments, solve intermediate semidefinite programs to optimize over the potential choices of norms along each individual segment, and ensure the reachable set containment by overapproximating the norm ball at the end of the previous segment with the new norm ball at the start of the next segment.
Algorithm~\ref{alg:Reach-iLQR} instead provides a continuous formulation where the hypercontrol smoothly varies the shaping matrix along the nominal trajectory, both avoiding successive norm overapproximations and allowing us to backpropogate the terminal set volume all the way to the initial timestep.
\end{remark}

\section{Examples}

In the following examples, we demonstrate how Algorithm~\ref{alg:Reach-iLQR} can (i) be used to quickly modify the reachable set to obtain drastically improve estimates in a short amount of time, and (ii) learn nontrivial open-loop policies for more difficult benchmarks.
For all ODE simulations, we use Euler integration with a step size of $0.01$. \footnote{The code is available at \url{https://github.com/gtfactslab/Harapanahalli_ACC2026}.}

\begin{figure}
    \centering
    \includegraphics[width=\columnwidth]{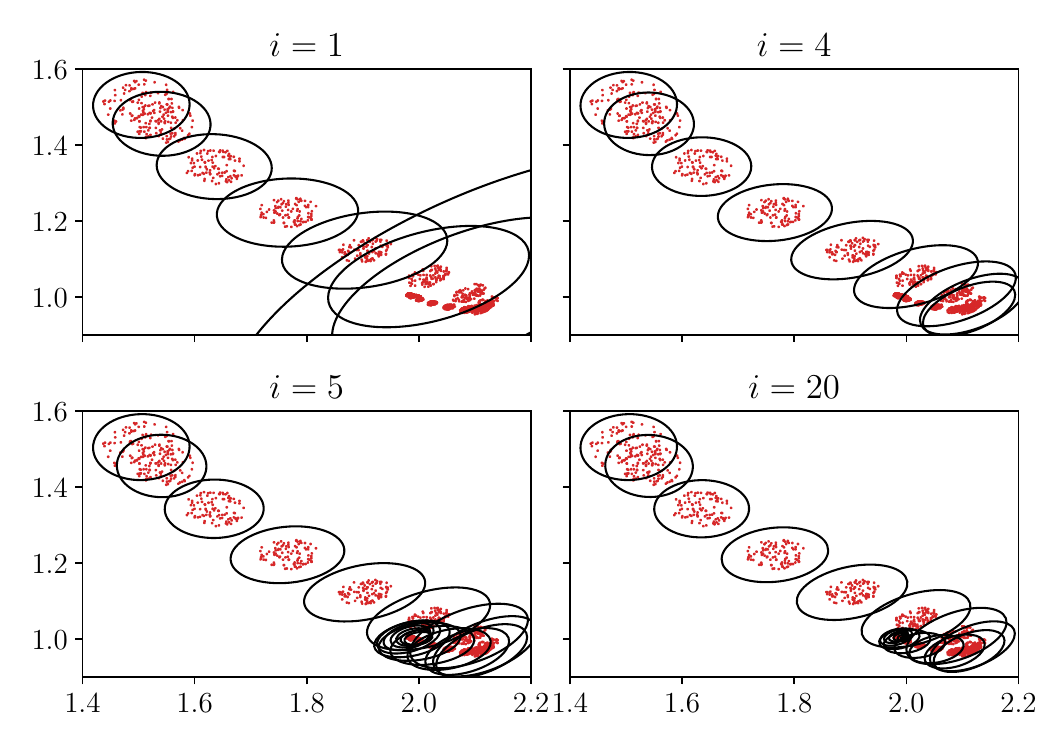}
    \includegraphics[width=\columnwidth]{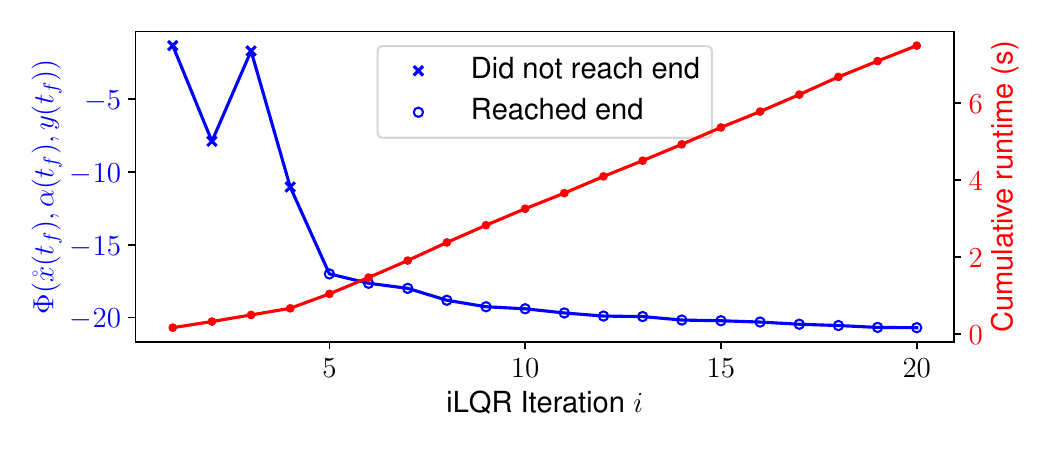}
    \caption{
    \textbf{Top:}
    Overapproximating reachable sets for the robot arm for $t\in[0,10]$ are plotted for various iteration numbers of Algorithm~\ref{alg:Reach-iLQR}, with Monte Carlo samples in red.
    \textbf{Bottom:} 
    The terminal cost $\Phi$ is plotted as a function of the iLQR iteration number in blue, while the cumulative runtime is in red. 
    After 5 iterations of iLQR, the overapproximate reachable set reaches the final timestep. 
    Iterating further, we can trade runtime for less overconservatism in the final reachable set. 
    }
    \label{fig:robot_ilqr}
\end{figure}

\begin{figure}
    \centering
    \includegraphics[width=\columnwidth]{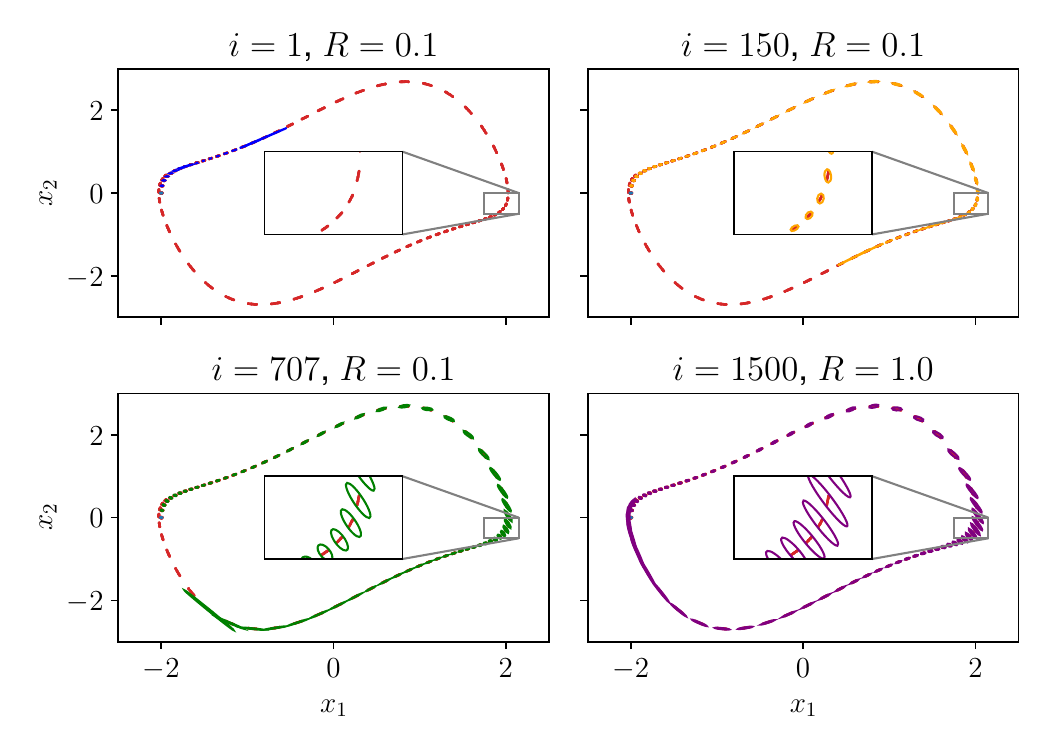}
    \includegraphics[width=\columnwidth]{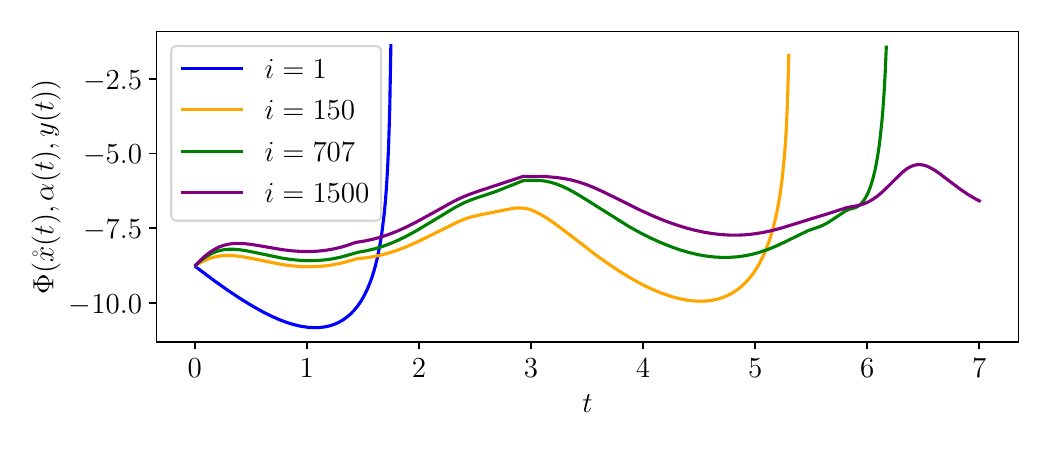}
    \caption{
    \textbf{Top:} Overapproximating reachable sets for the Van der Pol oscillator for $t\in[0,7]$ are shown for various iteration numbers of Algorithm 1, with Monte Carlo samples in red.
    \textbf{Bottom:} The function $\Phi(X(t))$ is plotted as a function of time for the same iteration numbers.
    While the pure adjoint dynamics ($i=1$) does well for small horizons, the error quickly accumulates over longer horizons and cannot reach the goal of $t_f=7$s.
    On the other hand, after 3000 iterations of iLQR, the algorithm qualitatively learns to accumulate extra error in earlier timesteps so it can reach and minimize the final timestep. 
    }
    \label{fig:vdp_ilqr}
\end{figure}

\subsection{Robot Arm}

Consider the dynamics of a robot arm from~\cite{angeli_characterization_2000,fan_locally_2016}
\begin{align*}
    \dot{q}_1 &= z_1, \quad \dot{q}_2 = z_2, \\
    \dot{z}_1 &= \tfrac{1}{mq_2^2 + ML^2/3}(-2mq_2z_1z_2 - k_{d_1}z_1 + k_{p_1}(u_1 - q_1)), \\
    \dot{z}_2 &= q_2 z_1^2 + \tfrac1m (- k_{d_2} z_2 + k_{p_2} (u_2 - q_2)),
\end{align*}
with $u_1 = 2$, $u_2 = 1$, $m=M=1$, $L=\sqrt{3}$, $k_{p_1} = 2$, $k_{p_2} = 1$, $k_{d_1} = 2$, $k_{d_2} = 1$.
We use the initial set $\normo{\ox_0,\alpha_0,y_0}_2$, where $\alpha_0 = P^{1/2}/0.1$ with $P$ and $\ox_0$ from~\cite{fan_locally_2016}, and $y_0 = 1$.
We note that this initial set has a radius $2.5$ times larger than the largest radius considered in the comparison example from~\cite{harapanahalli_linear_2024}, and thus a volume that is $39$ times larger.
We run Algorithm~\ref{alg:Reach-iLQR} with $t_f=10$, $\gamma=1$, $R = 20$, and $\Phi_\text{max} = -0.1$ for 20 iterations, taking a total of 7.5 seconds, and the results are shown in Figure~\ref{fig:robot_ilqr}.

\subsection{Van der Pol Oscillator} \label{ex:vanderpol}

Consider the Van der Pol oscillator written in usual state space coordinates as
\begin{align*}
    \dot{x}_1 = x_2,\quad \dot{x}_2 = -x_1 + \eta (1 - x_1^2)x_2,
\end{align*}
with nonlinearity $\eta = 1$. 
We consider the initial set $\dbrak{\ox_0,\alpha_0,y_0}_2$ for $\ox_0 = [-2\ 0]^T$, $\alpha_0 = 80I$, and $y_0 = 1$.

We split the iLQR iterations into two phases.
We first start with a small regularization of $R = 0.5I$ as an exploratory phase, keeping track of the best reachable set computation in all the iterates (the one that reaches the furthest before violation $\Phi > \Phi_\text{max}=-1.75$, with the smallest volume). 
After 750 iterations, we revert to the best iterate from the first phase ($i=707$) and use a larger regularizer $R = 5I$ for another 750 iterations to help reach the end.
All 1500 iterations took a total of 23.8 seconds to execute.

\section{Conclusion}
In this work, we presented an optimal control framework for synthesizing guaranteed overapproximate reachable sets with small terminal volume using iterative dynamic programming.
Our framework builds off the controlled parametric embedding approach described in \cite{harapanahalli_parametric_2025}, constructing a special case for \emph{normotope} sets using logarithmic and induced matrix norms.
In future work, we plan to use techniques from reinforcement learning to learn closed-loop policies using offline training to mimic the output of Algorithm~\ref{alg:Reach-iLQR}, which would be quick to evaluate for online reachable set computations.

\bibliographystyle{ieeetr}
\bibliography{references.bib}

\section*{Appendix}

\setcounter{subsection}{0}
\subsection{The LTV Case: Adjoint Satisfies Pontryagin's Principle}

As a sanity check, we verify that the adjoint equation satisfies Pontryagin's Minimum Principle (PMP) for the problem~\eqref{eq:OCP} in the LTV case.
Consider the linear system
\begin{align*}
    \dot{x} = A(t)x(t),
\end{align*}
and the embedding system from Theorem~\ref{thm:main_normotope_thm}, simplifying to
\begin{align*}
    \dot{\ox} = A(t) \ox(t), \ \dot{\alpha} = -\alpha(t) A(t) + \Tilde{U}(t)\alpha(t), \ \dot{y} = \mu (\Tilde{U}(t)) y,
\end{align*}
where we have made the variable substitution $\Tilde{U}(t) = (\alpha(t) A(t) + U(t))\alpha(t)^{-1}$.
Consider the volume terminal cost from~\eqref{eq:terminal_cost},
\begin{align*}
    \Phi(\ox,\alpha,y) = -\log\det(\alpha^T\alpha) + 2n\log y.
\end{align*}
The Hamiltonian of this problem is
\begin{align*}
    &H(t,\ox,\alpha,y,p_\ox,p_\alpha,p_y,\Tilde{U}) \\
    &= \<p_\ox,A\ox\> - \<p_\alpha, \alpha A\> + \<p_\alpha,\Tilde{U}\alpha\> 
     + \tfrac12 p_y y \lambda_{\max} (\Tilde{U}^T + \Tilde{U}),
\end{align*}
thus, the costate dynamics satisfy
\begin{align*}
    \dot{p}_\alpha &= -H_\alpha = p_\alpha A^T - \Tilde{U}^T p_\alpha \\
    \dot{p}_y &= -H_y = -\tfrac12 p_y \lambda_{\max} (\Tilde{U}^T + \Tilde{U})
\end{align*}
with terminal costates given by
\begin{align*}
    p_\alpha(t_f) = \tfrac{\partial\Phi}{\partial\alpha} = -2\alpha(t_f)^{-T}, \quad p_y(t_f) = \tfrac{\partial\Phi}{\partial y} = 2n/y(t_f).
\end{align*}
Set $\Tilde{U}^* = 0$.
Then $\dot{y} = 0$, $\dot{p}_y = 0$, and 
\begin{gather*}
    \frac{\rmd}{\rmd t} \alpha p_\alpha^T = \dot{\alpha} p_\alpha^T + \alpha \dot{p}_\alpha^T = -\alpha A p_\alpha^T + \alpha A p_\alpha^T = 0 \\
    \implies \alpha(t) p_\alpha(t)^T = \alpha(t_f) p_\alpha(t_f)^T = - 2 \bfI
\end{gather*}
Plugging into the Hamiltonian, 
\begin{align*}
    &H(\dots,\Tilde{U}) = \<p_\ox,A\ox\> - \<p_\alpha,\alpha A\> + \<p_\alpha,\Tilde{U}\alpha\> + \lambda_{\max} (\Tilde{U}^T + \Tilde{U}) \\
    &= H(\dots,\Tilde{U}^*) + n\lambda_{\max} (\Tilde{U}^T + \Tilde{U}) + \operatorname{tr}(p_\alpha^T\Tilde{U}\alpha) \\
    &= H(\dots,\Tilde{U}^*) + n\lambda_{\max} (\Tilde{U}^T + \Tilde{U}) - 2\operatorname{tr}(\Tilde{U}) \\
    &= H(\dots,\Tilde{U}^*) + n\lambda_{\max} (\Tilde{U}^T + \Tilde{U}) - \operatorname{tr}(\Tilde{U}^T + \Tilde{U}) \\
    &= H(\dots,\Tilde{U}^*) + \sum_{i=1}^n (\underbrace{\lambda_{\max} (\Tilde{U}^T + \Tilde{U}) - \lambda_i(\Tilde{U}^T + \Tilde{U})}_{\geq 0}).
\end{align*}
Thus, $H(\dots,\Tilde{U}^*)$ satisfies PMP.

\end{document}